\newcommand{\NP}{\ensuremath{\mathsf{NP}}}
\newcommand{\etal}{et al.~}
\newcommand{\mc}{\mathcal}
\newcommand{\vc}{{\bf c}}
\newtheorem{observation}[theorem]{Observation}
\newcommand{\dpg}{dot product graph}
\newcommand{\dpd}{dot product dimension}
\newcommand{\ddpg}[1]{\ensuremath{#1}-dot product graph}
\newcommand{\ddpr}[1]{\ensuremath{#1}-dot product representation}
\newcommand{\va}{{\bf a}}
\begin{document}

\title{What Graphs are $2$-Dot Product Graphs?
\thanks{An extended abstract of this paper appeared in the proceedings of Eurocomb 2015~\cite{JPV15a}.}
}
\titlerunning{Structural Results for 2-Dot Product Graphs}
\author{
Matthew Johnson\inst{1}
\and Dani\"el Paulusma\thanks{Author supported by EPSRC (EP/G043434/1).}\inst{1}
\and Erik Jan van Leeuwen\inst{2}
\institute{Department of Computer Science, Durham University, United Kingdom\\ 
\email{\{matthew.johnson2,daniel.paulusma\}@durham.ac.uk}
\and Department of Information and Computing Sciences, Utrecht University, The Netherlands\\\email{e.j.vanleeuwen@uu.nl}}}

\maketitle

\begin{abstract}
Let $d \geq 1$ be an integer. From a set of $d$-dimensional vectors, we obtain a $d$-\dpg\ by letting each vector $\va^u$ correspond to a vertex $u$ and by adding an edge between two vertices $u$ and $v$  if and only if their dot product $\va^{u} \cdot \va^{v} \geq t$, for some fixed, positive threshold~$t$.  Dot product graphs can be used to model social networks.  Recognizing a $d$-dot product graph is known to be \NP-hard for all fixed $d\geq 2$. To understand the position of $d$-dot product graphs in the landscape of graph classes, we consider the case $d=2$, and investigate how $2$-dot product graphs relate to a number of other known graph classes including a number of well-known classes of intersection graphs.
\end{abstract}

\section{Introduction}\label{s-intro}

Consider a social network in which each individual is friends with zero or more other individuals. In a vector model of the network, an individual $u$ is described by a $d$-dimensional vector~$\va^u$ for some integer $d\geq 1$ that  expresses the extent to which~$u$ has each of a set of $d$ attributes (which might, for example, represent their hobbies, political opinions or musical tastes). Then two individuals are assumed to be friends if and only if their attributes  are ``sufficiently similar''.  There are many ways to measure similarity using a vector model (see, for example, \cite{AdamicA2003,HoffRH2002,KimL2010,LCKFG10,WattsDN2002}). In this paper, we use the {\it dot product model}: two individuals~$u$ and~$v$ are friends if and only if the dot product $\va^{u} \cdot \va^{v} \geq t$, for some fixed, positive threshold~$t$. The corresponding graph~$G$, in which each individual is a vertex and the friendship relation is described by the edge set, is called a \emph{\ddpg{d}}.  We also say that the vector model $\{ {\bf a}^{u} \mid u \in V \}$ with the threshold~$t$ is a \emph{\ddpr{d}} of~$G$.

Dot product graphs have been studied from various perspectives. In particular, the study of dot product graphs as a model for social networks was initiated in a randomized setting~\cite{Mi08,Ni07,ScheinermanT2010,YS07,YS08}, where the dot product of two vectors gives the probability that an edge occurs between the corresponding vertices. In a recent paper~\cite{JPV15}, we started the study of dot product graphs from an algorithmic perspective by considering the problems of finding a maximum independent set or a maximum clique in a $d$-dot product graph.

Fiduccia~\etal\cite{FSTZ98} initiated a study of dot product graphs from a graph-theoretic perspective. They showed that every graph on $n$ vertices and $m$ edges has a dot product representation of dimension at most the minimum of $n-1$ and $m$. They also introduced  the natural notion of the \emph{dot product dimension} of a graph, which is the smallest $d$ such that the graph has a \ddpr{d}. Graphs of dot product dimension~$1$ are easily understood and can be recognized in polynomial time: they are precisely the disjoint union of at most two threshold graphs~\cite{FSTZ98}. This situation changes for higher values of the dot product dimension. Kang and M\"{u}ller~\cite{KM12} proved that recognizing graphs of any fixed dot product dimension $d\geq 2$ is \NP-hard (whereas  membership in $\NP$ is still open for $d\geq 2$).

It is well-known (and not difficult to see) that a class of graphs is closed under vertex deletion if and only if it can be characterised by a set of minimal forbidden induced subgraphs. Hence, as $d$-dot product graphs are closed under vertex deletion, there exists a set ${\cal H}_d$ of graphs such that a graph is $d$-dot product if and only if it contains no induced subgraph isomorphic to ${\cal H}_d$.  As we will argue later, however, consistent with the aforementioned \NP-hardness of recognising $d$-dot product graphs, the set~${\cal H}_d$ has infinite size even for $d=2$, and there is little hope of determining ${\cal H}_d$ (or of finding an alternative ``easy'' characterization of the class of $d$-dot product graphs).

\subsection{Previous Work}\label{s-related}

There are few previous studies that considered graphs of small dot product dimension 
$d\geq 2$. We even lack an in-depth understanding of how $2$-dot product graphs fit within the landscape of known graph classes. For the definitions of standard graph classes, we refer the reader to the text-book of Brandst{\"a}dt, Le and Spinrad~\cite{BLS99}. 

We first note that, in some sense, there are only a limited number of \ddpg{2}s.  The \emph{speed} of a graph class is the function whose domain is the natural numbers, and, for each $n$, the value is the number of labelled graphs on $n$ vertices in the class. 
The following result will be helpful when relating various graph classes to {\dpg}s, albeit in a non-constructive manner.

\begin{theorem}[{\cite[p.~56]{Spinrad2003}}] \label{thm:2dpg:count}
For any constant $d \geq 1$, the speed of the class of \ddpg{d}s is $2^{O(n \log n)}$.
\end{theorem}

We now consider what explicit results are known, starting with the following result of Fiduccia~\etal on interval graphs.

\begin{theorem}[\cite{FSTZ98}] \label{t-interval}
Every interval graph and every caterpillar is a \ddpg{2}.
\end{theorem}

Note that not all \ddpg{2}s are interval graphs; for instance, the cycle on four vertices is a \ddpg{2} but not an interval graph.  Every interval graph is chordal and for chordal graphs the following upper bound is known ($\omega(G)$ denotes the clique number of a graph~$G$).

\begin{theorem}[\cite{FSTZ98,LC13}]\label{t-1}
Every chordal~graph~$G$ on $n$ vertices has dot product dimension at most $\min\{\omega(G)+1,n/2\}$.
\end{theorem}

Fiduccia~\etal\cite{FSTZ98} also proved that every 
tree is a $3$-dot product graph, and there exist trees of dot product dimension $3$.
This implies that
neither all outerplanar nor all planar graphs are \ddpg{2}s. Kang~\etal\cite{KLMS11} significantly strengthened this observation
and proved that every
outerplanar graph is a $3$-dot product graph and every planar graph is a $4$-dot product graph.
Moreover, Kang~\etal showed that looking at the girth of the graph  
yields strong insight into the {\dpd} of planar graphs: every
planar graph of girth at least~$5$ is a $3$-dot product graph, whereas there exist planar graphs of girth~$4$ with dot product dimension~$4$.
Recall that the girth of a graph $G$ is the length of a shortest cycle of $G$.

For cycles, we obtain the following result by combing results of Fiduccia~\etal\cite{FSTZ98} for  cycles of length not equal to~5 and Li and Chang~\cite{LC13} for the cycle of length~5. 

\begin{theorem}[\cite{FSTZ98,LC13}]\label{t-cycle}
Every cycle is a $2$-dot product graph.
\end{theorem}

A \emph{wheel} is a graph obtained from a cycle by adding a dominating vertex.  Li and Chang showed the following results for wheels.

\begin{theorem}[\cite{LC13}]\label{t-wheel}
Every wheel on six or more vertices has dot product dimension~$3$.  
\end{theorem}

Fiduccia et al.~\cite{FSTZ98} conjectured that any graph on $n$ vertices has dot product dimension at most $\lfloor\frac{n}{2}\rfloor$. Note that chordal graphs satisfy this bound due to Theorem~\ref{t-1}. Fiduccia et al.\ proved that the bound is tight for bipartite graphs. To be precise, they showed that 
every bipartite graph on $n$ vertices is an $\lfloor\frac{n}{2}\rfloor$-product graph, and the complete bipartite graph $K_{\lfloor\frac{n}{2}\rfloor,\lfloor\frac{n}{2}\rfloor}$  has dot product dimension $\lfloor\frac{n}{2}\rfloor$.
Li and Chang confirmed the conjecture of Fiduccia et al.~\cite{FSTZ98} for two more graph classes,
namely graphs of girth at least~$5$ and $P_4$-sparse graphs
(a graph is \emph{$P_4$-sparse} if every set of~$5$ vertices contains at most one~$P_4$ as an induced subgraph).

We observe that, in spite of these results, there are still many graph classes for which the relation to the class of graphs of small dot product dimension (and of dot product dimension $2$ in particular) is unclear.

\subsection{Our Results}\label{s-ours}

We provide a more complete picture of the place of \ddpg{2}s in the landscape of known graph classes.  To this end, we identify several new graph classes whose members are \ddpg{2}s. We also show that certain graph classes are neither contained in the class of \ddpg{2}s nor do they contain all \ddpg{2}s. In particular, our work provides some evidence that no well-known graph class includes all \ddpg{2}s (however, we note explicitly here that we neither claim nor conjecture this).

In Section~\ref{subsec:obs} we state several observations on \ddpg{2}s, and relate 2-dot product graphs to several further known graph classes. In the same section we present a number of lemmas that we need in the remainder of our paper.

In Section~\ref{s-com} we consider \emph{co-bipartite graphs} (complements of bipartite graphs). We prove that a complete graph minus a matching remains a \ddpg{2}. We complement this result by showing that there exist other co-bipartite graphs with dot product dimension greater than~$2$.

In Section~\ref{s-circular} we consider several classes of intersection graphs. An \emph{intersection graph} of any collection of sets has the sets as its vertex set and its edges represent when pairs of sets intersect. 
Motivated by Theorems~\ref{t-interval} and~\ref{t-1}, we further investigate these classes.
We show that, for every $k\geq 2$, the intersection graph of any set of unit caps of the $k$-dimensional unit sphere  has dot product dimension at most $k+1$.  A \emph{unit disk graph} is the intersection graph of unit-size disks in the plane. Using our general upper bound for unit spheres, we prove that unit disk graphs have dot product dimension at most $3$.  Note that this bound is sharp as the 6-wheel, which has dot product dimension~3 by Theorem~\ref{t-wheel}, is a unit disk graph (see~\cite{CS99}).  A \emph{unit circular-arc graph} is the intersection graph of unit-length arcs of a circle. We prove that not all unit circular-arc graphs have dot product dimension~$2$. Again, by using our general bound for unit spheres, we give sufficient conditions for a unit circular-arc graph to have dot product dimension~$2$.
 
In Section~\ref{s-split} we consider \emph{split graphs}; that is, graphs whose vertices can be partitioned into two sets that induce an independent set and a clique.  We show the existence of split graphs with dot product dimension greater than~2. 

\section{Observations and Supporting Lemmas} \label{subsec:obs}
Throughout, we assume that the threshold $t = 1$, unless stated otherwise.  As any $d$-dot product graph has a representation with threshold $1$ by scaling~\cite{FSTZ98}, this is no restriction.
Furthermore, by scaling (such that no two vectors have dot product exactly~$1$, see~\cite{FSTZ98}) and rotating the vectors slightly, we may assume that no two vectors are linearly dependent.
For dimension $d=2$, this allows us to define a total ordering on the vectors with respect to a reference vector.

Recall that because the class of \ddpg{2}s is closed under vertex deletion, it can be characterized by a set of forbidden induced subgraphs. However, the class of \ddpg{2}s is not well-quasi-ordered under the forbidden induced subgraph relation, that is, it has no {\it finite} set of forbidden induced subgraphs, because every wheel must be in this set of forbidden induced subgraphs by Theorem~\ref{t-wheel}. Indeed, a wheel minus a vertex is either a cycle or a fan, and thus has dot product dimension~$2$.

\begin{figure}
  \centering
  \includegraphics[scale=0.2]{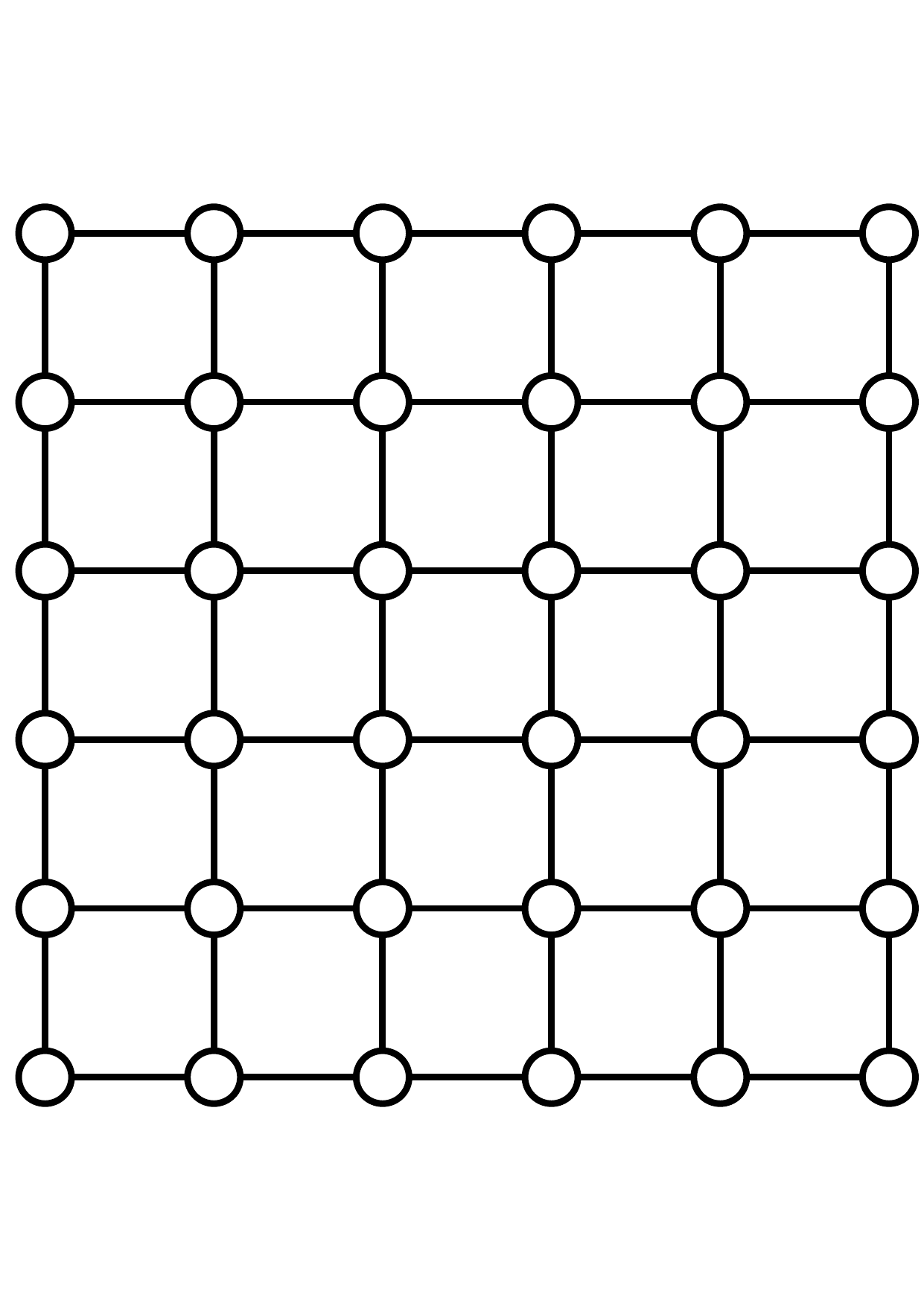}\\[-30pt]
  \caption{The grid $M_6$.}\label{f-figure}
\end{figure}

We now make the following observation to illustrate that a number of well-known graph classes are not superclasses of 2-dot product graphs. The $k\times k$ {\it grid} $M_k$ has as vertex set pairs $(i,j)$, $1 \leq i \leq k$, $1 \leq j \leq k$, and two vertices $(i,j)$ and $(i^\prime,j^\prime)$ are joined by an edge if and only if $|i-i^\prime| + |j-j^\prime| = 1$; see Figure~\ref{f-figure} for an example.
 
\begin{observation}
For each of the following graph classes, there are \ddpg{2}s that do not belong to the class:
\begin{itemize}
\item [(i)] triangle-free graphs,
\item [(ii)] claw-free graphs,
\item  [(iii)] for any graph $H$, the class of $H$-minor-free graphs,
\item  [(iv)] each of split graphs, AT-free graphs, even-hole-free graphs and odd-hole-free graphs, 
\item  [(v)] circular-arc graphs, 
\item  [(vi)] disk graphs, and
\item  [(vii)] circular perfect graphs (which is a superclass of the class of perfect graphs).
\end{itemize}
Also any grid graph $M_k$ with $k\geq 3$ is not a $2$-dot product graph. 
\end{observation}

\begin{proof}
To prove (i), we observe that the triangle is a 2-dot product graph, and, for (ii), we note that the claw has a 2-dot product representation as well (for example, take $t=3$ and vectors $(1,1)$, $(1,1)$, $(1,1)$ and $(2,2)$). To prove (iii), we observe that the class of 2-dot product graphs contains arbitrarily large cliques. Statement~(iv) follows from Theorem~\ref{t-cycle}.  Statement~(v)  is proven by taking, for example, the complete graph on four vertices and adding a pendant vertex to each vertex.  Statement~(vi) follows from taking the bi-4-wheel, which is obtained by taking a wheel on five vertices and adding a sixth vertex adjacent (only) to the four vertices of the cycle. This graph is not a disk graph, but has a 2-dot product representation: $(0,5)$, $(\frac{1}{5},2)$, $(\frac{1}{2},\frac{1}{2})$, $(\frac{1}{2},\frac{1}{2})$, $(2,\frac{1}{5})$, $(5,0)$ with $t=1$.  Because, by Theorem~\ref{t-cycle}, $C_5$ has a \ddpr{2}, we know that 2-dot product graphs are not circular perfect, which shows~(vii). Finally we note that the graph $M_3$ can easily be shown not to have a \ddpr{2} by following the proof for wheels in~\cite{LC13}.  \qed
\end{proof}

For the remainder of our paper we need some definitions and four lemmas, some of which are known already. For \ddpg{2}s, we say that a vertex $v$ is \emph{between} vertices $u$ and $w$ if $\vec{a}^{v}$ can be written as a nonnegative linear combination of $\vec{a}^{u}$ and $\vec{a}^{w}$, that is, $\vec{a}^v=\lambda_1\vec{a}^u+\lambda_2\vec{a}^w$ for some $\lambda_1,\lambda_2\geq 0$. It is important to note the geometric intuition behind this definition, as we rely on it more frequently than the algebraic definition. Namely, if $u$ and $w$ are not opposite, then $v$ is between $u$ and $w$ if $\vec{a}^{v}$ lies within the smaller of the two angles defined by $\vec{a}^{u}$ and $\vec{a}^{w}$ (see Figure~\ref{fig:between}).

\begin{figure}[tb]
\begin{center}
\begin{tikzpicture}
\draw[very thick] (0,0) -- (70:1);
\draw[very thick] (0,0) -- (90:1.5);
\draw[very thick] (0,0) -- (120:2);
\draw[very thick] (0,0) -- (135:1);
\node[above] at (90:1.5) {a};
\node[above] at (70:1) {b};
\node[above] at (120:2) {c};
\node[above left] at (135:1) {d};
\end{tikzpicture}
\end{center}
\caption{An illustration of the geometric intuition for the notion of betweenness. For example,~$a$ is between $c$ and $b$ and between $d$ and $b$, but $b$ is not between $a$ and $c$.}
\label{fig:between}
\end{figure}

\begin{lemma}[\cite{FSTZ98}]\label{lema}
Let $a,b,c$ and $d$ be vertices in a graph with a \ddpr{2} such that $ad$ and $bc$ are edges and $ac$ and $bd$ are not edges. Then $b$ and~$c$ are not both between $a$ and $d$.
\end{lemma}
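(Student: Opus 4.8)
The plan is to argue by contradiction using the geometry of the dot product in the plane. Suppose $b$ and $c$ are both between $a$ and $d$, so all four vectors $\va^a, \va^b, \va^c, \va^d$ lie in a common plane (which we may take to be $\mathbb{R}^2$ after restricting to the span of $\va^a$ and $\va^d$), and both $\va^b$ and $\va^c$ lie within the smaller angle cone spanned by $\va^a$ and $\va^d$. The key idea is that for vectors in a fixed plane, ordered by angle around the origin, the dot product behaves ``unimodally'': if $\va^x$ lies in the angular sector between $\va^a$ and $\va^d$, then $\va^a \cdot \va^x$ and $\va^d \cdot \va^x$ are each at least as large as the ``worst case'' of comparing against the endpoints, in a sense that lets us bound $\va^a \cdot \va^x$ from below by the dot products with the cone's boundary. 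First I would set up coordinates: write each of $\va^a, \va^b, \va^c, \va^d$ in polar form $r_x(\cos\theta_x, \sin\theta_x)$, and assume without loss of generality that $\theta_a \le \theta_b, \theta_c \le \theta_d$ with $\theta_d - \theta_a < \pi$ (the ``smaller angle'' condition).

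The core step is a monotonicity/convexity observation about $f(\theta) = \va^y \cdot (\cos\theta,\sin\theta) = \|\va^y\|\cos(\theta - \theta_y)$ for a fixed vector $\va^y$: this is a shifted cosine, hence on any interval of length less than $\pi$ it has no interior local minimum, so its minimum over $[\theta_a, \theta_d]$ is attained at an endpoint. Apply this with $\va^y = \va^d$: since $bd$ is a non-edge we have $\va^b \cdot \va^d < 1$, and we want to derive that $\va^c \cdot \va^d$ or some other forbidden quantity is forced. The clean way is: because $\va^b$ is between $\va^a$ and $\va^d$, the unit vector $\hat{b}$ lies in the cone, and $\va^a \cdot \va^d \geq 1$ (since $ad$ is an edge) while $\va^b \cdot \va^d < 1$. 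I would then use that $\va^c$ is also in the cone together with $bc$ being an edge ($\va^b \cdot \va^c \ge 1$) to locate $\va^c$ angularly relative to $\va^b$, and combine with $ad$ being an edge to force $\va^a \cdot \va^c \ge 1$, contradicting that $ac$ is a non-edge — or symmetrically force $\va^b \cdot \va^d \ge 1$. Concretely, one shows that among the four vectors the two ``extreme'' ones in angle must be $a$ and $d$, and that any two vectors whose angular positions are separated by the angular positions of an edge-pair must themselves have large dot product; quantitatively this reduces to a statement like $\cos\alpha \cdot (\text{something}) + \cos\beta \cdot(\text{something}) \ge$ the required bound, using $\|\va^a\|\|\va^d\|\cos(\theta_d-\theta_a) \ge 1$.

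The main obstacle I anticipate is handling the vector lengths $r_x$ cleanly: the betweenness hypothesis is about directions, but the edge/non-edge conditions mix directions and magnitudes, so a purely angular argument is not quite enough. I would deal with this by writing $\va^b = \lambda \va^a + \mu \va^d$ with $\lambda, \mu \ge 0$ (the definition of ``between'') and similarly $\va^c = \lambda' \va^a + \mu' \va^d$, then expanding all four dot products bilinearly in terms of the three quantities $\va^a\cdot\va^a$, $\va^a\cdot\va^d$, $\va^d\cdot\va^d$. The edge conditions $\va^a\cdot\va^d \ge 1$, $\va^b\cdot\va^c \ge 1$ and non-edge conditions $\va^a\cdot\va^c < 1$, $\va^b\cdot\va^d < 1$ then become linear (in $\lambda,\mu,\lambda',\mu'$) inequalities in these three Gram quantities, and a short case analysis on the signs of the resulting expressions — using positive semidefiniteness of the Gram matrix, i.e. $(\va^a\cdot\va^d)^2 \le (\va^a\cdot\va^a)(\va^d\cdot\va^d)$ — yields the contradiction. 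The bookkeeping in this expansion is the only delicate part; everything else is forced.
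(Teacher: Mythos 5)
Note first that the paper does not prove this lemma itself: it is quoted from Fiduccia et al.~\cite{FSTZ98}, so there is no in-paper argument to compare against, and your proposal has to be judged on its own. Your plan has two halves, and only the second does the work. The angular/unimodality discussion never reaches a contradiction (and you correctly diagnose why: betweenness constrains directions only, while the edge and non-edge conditions mix directions with magnitudes), so it can simply be dropped. The final paragraph is the right route and it does close, but the decisive step, which you leave as ``a short case analysis on the signs \ldots yields the contradiction,'' should be made explicit; in fact no case analysis is needed because every quantity in sight is nonnegative. Write $\va^b=\lambda\va^a+\mu\va^d$ and $\va^c=\lambda'\va^a+\mu'\va^d$ with $\lambda,\mu,\lambda',\mu'\ge 0$, and set $A=\va^a\cdot\va^a$, $D=\va^d\cdot\va^d$, $P=\va^a\cdot\va^d\ge 1$ (edge $ad$). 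The non-edges give $0\le\lambda'A+\mu'P<1$ and $0\le\lambda P+\mu D<1$, and the missing step is to multiply these two inequalities and compare with the bilinear expansion of $\va^b\cdot\va^c$:
\[
\va^b\cdot\va^c=\lambda\lambda'A+(\lambda\mu'+\mu\lambda')P+\mu\mu'D
\;\le\;
\lambda\lambda'AP+\lambda\mu'P^2+\mu\lambda'AD+\mu\mu'PD
=(\lambda'A+\mu'P)(\lambda P+\mu D)<1,
\]
where the termwise comparison uses $P\ge 1$ and, for the $\mu\lambda'$ term, $AD\ge P^2\ge P$ (Cauchy--Schwarz). This contradicts the edge $bc$. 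So your setup is sound and completable exactly as you anticipate --- the Gram-matrix/Cauchy--Schwarz ingredient you name is the right one --- but as written the proposal stops just short of the one inequality that actually produces the contradiction.
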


\begin{lemma}[\cite{JPV15}]\label{lemb}
If $a,b$ and $c$ are vertices in a graph with a \ddpr{2}, and $c$ is between $a$ and $b$, and $ab$ is an edge but $ac$ is not an edge, then $|\va^{b}|>|\va^{c}|$.
\end{lemma}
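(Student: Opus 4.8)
The plan is to argue directly with the vectors of a given \ddpr{2}, taking the threshold to be $t=1$ (which is no restriction). Since $c$ is between $a$ and $b$, write $\va^{c} = \alpha\,\va^{a} + \beta\,\va^{b}$ with $\alpha,\beta \geq 0$. The hypothesis that $ab$ is an edge gives $\va^{a}\cdot\va^{b} \geq 1$, and the hypothesis that $ac$ is not an edge gives $\va^{a}\cdot\va^{c} < 1$. Abbreviate $p = |\va^{a}|^{2}$, $q = \va^{a}\cdot\va^{b}$ and $r = |\va^{b}|^{2}$, so $q \geq 1$; expanding, $\va^{a}\cdot\va^{c} = \alpha p + \beta q$ and $|\va^{c}|^{2} = \alpha^{2} p + 2\alpha\beta q + \beta^{2} r$. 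The goal is to show $|\va^{c}|^{2} < r$.

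First I would dispose of the degenerate cases. If $\va^{a}=\mathbf{0}$ or $\va^{b}=\mathbf{0}$ then $\va^{a}\cdot\va^{b}=0<1$, contradicting that $ab$ is an edge, so $p,r>0$. If $\alpha = 0$ then $\va^{c} = \beta\,\va^{b}$ and $\beta q = \va^{a}\cdot\va^{c} < 1 \leq q$ forces $\beta < 1$, so $|\va^{c}| = \beta\,|\va^{b}| < |\va^{b}|$. In the remaining case $\alpha>0$, from $0 < \alpha p \leq \alpha p + \beta q < 1$ we get $\beta q < 1$ (hence $\beta<1$, as $q\geq1$) and $\alpha p < 1-\beta q$. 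The core is then a short chain of inequalities: substituting $\alpha^{2}p = \alpha(\alpha p) < \alpha(1-\beta q)$ into the expansion of $|\va^{c}|^{2}$ gives $|\va^{c}|^{2} < \alpha(1+\beta q) + \beta^{2} r$; next, $\alpha < (1-\beta q)/p$ together with $1+\beta q>0$ gives $|\va^{c}|^{2} < (1-\beta^{2}q^{2})/p + \beta^{2} r$; and finally $(1-\beta^{2}q^{2})/p \leq (1-\beta^{2})q^{2}/p \leq (1-\beta^{2})r$, where the first bound is just the inequality $1\leq q^{2}$ and the second is Cauchy--Schwarz $q^{2}\leq pr$ (using $1-\beta^{2}>0$). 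Chaining yields $|\va^{c}|^{2} < (1-\beta^{2})r + \beta^{2} r = r$, as required.

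The one non-mechanical point is spotting \emph{how} to combine the two hypotheses so that the final reduction lands exactly on ``$q\geq1$'': the edge $ab$, encoded as $\va^{a}\cdot\va^{b}\geq1$, is precisely what closes the last step, while the non-edge $ac$ and the nonnegativity of $\alpha,\beta$ are spent earlier. Everything else is routine manipulation plus a single use of Cauchy--Schwarz; note that the argument never uses that the representation is two-dimensional, so the statement in fact holds in any dimension. Finally, I would check that strictness propagates through the chain (it does, since $\alpha>0$ and $\va^{a}\cdot\va^{c}<1$ is strict), which is what gives the strict conclusion $|\va^{b}| > |\va^{c}|$ stated in the lemma.
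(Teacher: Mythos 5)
Your proof is correct; I checked each step of the chain. With $\va^{c}=\alpha\va^{a}+\beta\va^{b}$, $\alpha,\beta\geq 0$, $q=\va^{a}\cdot\va^{b}\geq 1$ and $\alpha p+\beta q<1$, your case split is sound: the degenerate cases force $p,r>0$ and handle $\alpha=0$ directly, and for $\alpha>0$ the substitutions $\alpha^{2}p<\alpha(1-\beta q)$, then $\alpha(1+\beta q)<(1-\beta^{2}q^{2})/p$, then $1\leq q^{2}$ and Cauchy--Schwarz $q^{2}\leq pr$ (with $1-\beta^{2}>0$ since $\beta q<1$ and $q\geq 1$) do chain to $|\va^{c}|^{2}<r$, with strictness preserved because $\alpha>0$ and the non-edge inequality is strict. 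One caveat on the comparison you were asked about: this paper does not prove Lemma~\ref{lemb} at all --- it is imported from the earlier paper \cite{JPV15} --- so there is no in-paper argument to measure yours against. Taken on its own terms, your proof is a clean, self-contained algebraic verification, and your closing remark is a genuine bonus: nothing in the argument uses the dimension, so the lemma holds for $d$-dot product representations for every $d$, whereas the statement as used here only needs $d=2$.
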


\begin{lemma} \label{lemb2}
Let $a,b,c$ and $d$ be vertices in a graph with a \ddpr{2} such that $ab$ and $cd$ are edges, and $ac$ and $bd$ are not edges. If $c$ is between $a$ and~$b$, then $b$ is not between $c$ and $d$.
\end{lemma}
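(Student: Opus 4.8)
The plan is to prove the statement in contrapositive spirit: assume for contradiction that $c$ is between $a$ and $b$ \emph{and} $b$ is between $c$ and $d$, and then show that the magnitudes $|\va^{b}|$ and $|\va^{c}|$ must satisfy two incompatible strict inequalities. The key observation is that Lemma~\ref{lemb} already packages exactly the kind of inequality we need out of a ``betweenness plus edge/non-edge'' configuration, so the whole argument should reduce to two careful applications of it, together with the fact that the relation ``between'' is symmetric in its two endpoint arguments.

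First I would apply Lemma~\ref{lemb} to the triple $a,b,c$: here $c$ is between $a$ and $b$, the pair $ab$ is an edge, and $ac$ is a non-edge, so all hypotheses are met and we conclude $|\va^{b}|>|\va^{c}|$. Next I would apply Lemma~\ref{lemb} to the triple $d,c,b$, that is, with $d$ and $c$ in the roles of the two endpoints and $b$ in the role of the vertex that lies between them. Since betweenness is symmetric in its endpoints, ``$b$ is between $c$ and $d$'' is the same as ``$b$ is between $d$ and $c$'', and moreover $dc$ is an edge while $db$ is a non-edge; hence the hypotheses of Lemma~\ref{lemb} hold once more and we obtain $|\va^{c}|>|\va^{b}|$. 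These two inequalities are contradictory, which finishes the proof.

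The only thing that needs care is the bookkeeping in the second application: one must check that the edge/non-edge data demanded by Lemma~\ref{lemb} (an edge between the two endpoints, a non-edge between the first endpoint and the middle vertex) are precisely $cd$ being an edge and $bd$ being a non-edge, which are exactly the hypotheses supplied. There are no degenerate cases, since the hypotheses $ab,cd\in E$ and $ac,bd\notin E$ force $a,b,c,d$ to be pairwise distinct. I therefore do not expect any genuine obstacle here; the real content of the lemma is simply that the two betweenness configurations encoded in its hypotheses impose opposite comparisons of $|\va^{b}|$ and $|\va^{c}|$, and Lemma~\ref{lemb} is exactly the tool that extracts each comparison.
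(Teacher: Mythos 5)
Your proposal is correct and takes essentially the same route as the paper: both apply Lemma~\ref{lemb} once to $a,b,c$ to get $|\va^{b}|>|\va^{c}|$ and once more with $d,c,b$ (using $cd\in E$, $bd\notin E$) to get $|\va^{c}|>|\va^{b}|$, a contradiction. Your role bookkeeping in the second application is exactly what the paper does, just spelled out more explicitly.
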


\begin{proof}
By Lemma~\ref{lemb} we have that if $c$ is between $a$ and $b$, then $|\va^{b}|>|\va^{c}|$.  However, if $b$ is between $c$ and $d$, then, using Lemma~\ref{lemb} again considering $b$, $c$ and~$d$,  we have $|\va^{c}|>|\va^{b}|$, or equivalently,  $|\va^{b}|<|\va^{c}|$, a contradiction.\qed
\end{proof}

A 4-cycle has a 2-dot product representation.  Here is one example: $(2,0)$, $(\frac{1}{2},\frac{1}{2})$, $(\frac{1}{2},1)$, $(-1,3)$ with $t=1$.  We note that if we consider the vertices in anticlockwise order from the positive $x$-axis, we find that the first and fourth vertices, and the second and third vertices, are non-adjacent.  The next lemma tells us that this is an example of a property found in all 2-dot product representations of the 4-cycle.

\begin{lemma} \label{lemc}
If $a$, $b$, $c$ and $d$ are vertices in a graph with a \ddpr{2} that induce a $4$-cycle with edges $ab$, $bc$, $cd$ and $ad$, then $\va^a$, $\va^b$, $\va^c$ and~$\va^d$ are in a half-plane whose bounding line goes through the origin.
Moreover, either $b$ and $d$ are between $a$ and $c$, or vice versa ($a$ and $c$ are between $b$ and $d$).
\end{lemma}

\begin{proof} 
By Lemma~\ref{lema}, $b$ and $c$ are not both between $a$ and $d$.   So we can assume, without loss of generality, that $b$ is not between $a$ and $d$. Then, as $a$ is adjacent to both $b$ and~$d$, 
$\va^b$ and $\va^d$ are both in the half-plane through the origin defined by $\va^a$, 
and thus the smallest angles between all pairs among $\va^a,\va^b,\va^d$ are at most $\pi$. 
Hence, either $d$ is between~$a$ and $b$ or $a$ is between $b$ and $d$.

First assume that $d$ is between $a$ and $b$. Then, by Lemma~\ref{lema}, $c$ is not between $a$ and $b$, and by Lemma~\ref{lemb2}, $a$ is not between $c$ and $d$. So $b$ and $d$ are between $a$ and $c$ and we have the required ordering.  Noting also that $a$ and $c$ are first and fourth in the ordering and have common neighbours implies that the four vectors lie in a half-plane.

Now assume that $a$ is between $b$ and $d$. Then, by Lemma~\ref{lema}, $a$ and $b$ are not both between $c$ and $d$ and $a$ and $d$ are not both between $b$ and $c$.  So $a$ and $c$ are both between $b$ and $d$ and again we have the required ordering. 
\qed
\end{proof}

\section{Co-Bipartite Graphs}\label{s-com}

In this section, we discuss some co-bipartite graphs which have a \ddpr{2} and some which do not, with the aim of understanding the relation between co-bipartite graphs and \ddpg{2}s. We first observe the following.

\begin{observation}
For any constant $d \geq 1$, there exist co-bipartite graphs that do not have a \ddpr{d}.
\end{observation}
\begin{proof}
By a result of Alekseev~\cite{Alekseev1993} (see also~\cite[Theorem~8.3]{Spinrad2003}),  the speed of co-bipartite graphs is $2^{\Theta(n^2)}$. Recalling Theorem~\ref{thm:2dpg:count}, however, the speed of \ddpg{d}s is only $2^{O(n \log n)}$. \qed
\end{proof}

As the above result is only existential, we aim for an explicit construction. In fact, we exhibit two similar classes of co-bipartite graphs  of which one has a \ddpr{2} and the other has not. First, we show that a complete graph minus a matching is a \ddpg{2}.

\begin{theorem}  \label{t-matching}
Let $G$ be a graph obtained from a complete graph by removing the edges of a matching.  Then $G$ has a \ddpr{2}.
\end{theorem}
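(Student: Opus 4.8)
The plan is to give an explicit $2$-dot product representation. Suppose $G$ has vertex set $\{1,\dots,n\}$ and the removed matching pairs up vertices $(u_1,v_1),\dots,(u_k,v_k)$, leaving the remaining vertices unmatched. The guiding idea is that vectors with small positive angle between them have large positive dot product, so to make a near-complete graph we place all vectors in a narrow cone around a fixed direction; the only non-edges we must create are the matched pairs, and for those we must drive the dot product below the threshold $t=1$ by making the two vectors of a matched pair point in \emph{almost perpendicular} (or even slightly opposing) directions, while still keeping each of them at positive dot product with everything else.

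First I would set up coordinates so that all vectors lie close to the positive $x$-axis: write a vector as $(\cos\theta,\sin\theta)$ scaled by a length $r$, with all angles $\theta$ in a small interval around $0$. For a matched pair $(u_i,v_i)$ I would instead use a pair of directions that are far apart — for concreteness, think of assigning to $u_i$ a direction at angle roughly $+\phi_i$ and to $v_i$ a direction at angle roughly $-\phi_i$ for suitable $\phi_i$ close to $\pi/4$, so that $\va^{u_i}\cdot\va^{v_i}$ is small, while every other vector sits at an angle close enough to both that the dot products stay $\geq 1$. A clean way to organize this is to handle the matched pairs with disjoint small angular ``wedges'': give pair $i$ the two directions $(\cos\alpha_i,\pm\sin\alpha_i)$ placed symmetrically about a central axis that is itself only slightly off the $x$-axis, with the wedges for different pairs nested or stacked so that across pairs the angle between any two vectors is still small. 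Then choose the lengths $r$ large enough (uniformly) that every intended edge has dot product at least $1$; since all pairwise angles except within a matched pair are bounded away from $\pi/2$, a common large scaling works, and for a matched pair the two vectors being nearly orthogonal makes their dot product small \emph{relative} to its length-squared, so one can still separately tune it below $1$.

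The main obstacle is the simultaneous satisfaction of the two opposing constraints: each matched pair must be pushed apart in angle enough to get dot product $< 1$, yet every vector must remain within a narrow enough cone of every \emph{non}-partner to keep those dot products $\geq 1$. The key quantitative step is to verify that these constraints are compatible — essentially, that one can choose the angular spread $\phi_i$ of the $i$-th pair and the lengths so that (i) $\va^{u_i}\cdot\va^{v_i} < 1$, (ii) $\va^{u_i}\cdot\va^{w} \geq 1$ and $\va^{v_i}\cdot\va^{w}\geq 1$ for every other vertex $w$, including $w$ in another matched pair. I expect this to reduce to picking the pairwise angles from a set bounded strictly below $\pi/2$ (say all at most some $\theta^\ast<\pi/2$) and then taking all lengths equal to a single value $R$ with $R^2\cos\theta^\ast \geq 1$, while within each matched pair the angle is tuned close enough to $\pi/2$ (or the lengths shrunk slightly) to force that one dot product below $1$. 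Once the finitely many inequalities are seen to have a common solution, the representation is complete; writing out one explicit family of vectors (parametrized by the matching) and checking the handful of dot-product inequalities is the routine remainder.
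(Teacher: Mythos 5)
Your overall plan---non-edges only between matched pairs, created by making those two vectors nearly orthogonal while keeping everything else at small mutual angle---founders at exactly the step you call the ``key quantitative step,'' and the specific scheme you propose (all lengths equal to a common value $R$, all non-matched pairs at angle at most some $\theta^\ast<\pi/2$ with $R^2\cos\theta^\ast\geq 1$) is provably impossible as soon as there are two matched pairs. With all lengths equal, a pair is an edge if and only if the angle between its vectors is at most $\theta_0:=\arccos(1/R^2)<\pi/2$. Take matched pairs $(u_1,v_1)$ and $(u_2,v_2)$: you need the angle between $u_1$ and $v_1$ to exceed $\theta_0$, while $u_2$ and $v_2$ must each be within angle $\theta_0$ of both $u_1$ and $v_1$. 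The set of directions within $\theta_0$ of both $u_1$ and $v_1$ is a single arc of angular length $2\theta_0-d(u_1,v_1)<\theta_0$, so $u_2$ and $v_2$ end up at angle less than $\theta_0$ from each other and become adjacent---contradiction. The same obstruction shows that your ``nested or stacked wedges'' picture cannot keep cross-pair angles small: if pair $i$ sits at directions $\pm\alpha_i$ about a common axis with $2\alpha_i$ near $\pi/2$, then for a second pair at $\pm\alpha_j$ the cross angle $\alpha_i+\alpha_j$ is also near $\pi/2$, so the assertion that ``across pairs the angle between any two vectors is still small'' is false. Your parenthetical escape hatch (``or the lengths shrunk slightly'') is the whole difficulty, and it is not developed.

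The missing idea is that the lengths must differ drastically across pairs, and the paper's proof is exactly such a construction: with $b(k)=2^k-1$ it assigns to the $i$-th matched pair the vectors $(1/b(i),\,b(i-1))$ and $(b(i-1),\,1/b(i))$, a nested family straddling the diagonal with lengths growing geometrically in $i$. Within a pair the dot product is $2b(i-1)/b(i)=(2^i-2)/(2^i-1)<1$, while for two vectors from different pairs the dot product is a sum of two quotients such as $b(j-1)/b(i)+b(i-1)/b(j)$, and the geometric growth guarantees that one of the two terms alone is already at least $1$---so cross pairs are adjacent even though their angle may also be close to $\pi/2$. (Vertices outside the matching are then simply given the vector $(1,1)$.) In short, your reduction to ``finitely many inequalities that clearly have a common solution'' is where the proof actually lives, and under the equal-length normalization you chose those inequalities have no solution; to complete the argument you would have to abandon the common scale and produce something like the paper's explicit unequal-length family, together with the verification of the cross-pair inequalities.
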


\begin{proof}
Let $m$ be a positive integer.  Let $\{v_1, v_2, \ldots, v_m, w_1, w_2, \ldots ,w_m\}$ denote the vertex set of $K_{2m}$, and let $I_m$ denote the set of edges $\{v_iw_i \mid 1 \leq i \leq m\}$ which form a perfect matching.  We will first prove the special case of the theorem where $G=K_{2m}-I_m$.  For a nonnegative integer $k$, let $b(k)= 2^{k+1} - 1$.  For $1 \leq i \leq m$, let $\va^{v_i}  =  (1/b(i), b(i-1))$, $\va^{w_i}  =  (b(i-1), 1/b(i)). $ We show this is a \ddpr{2} for $K_{2m}-I_m$.  First consider pairs $v_i, w_i$:
\[
\va^{v_i} \cdot \va^{w_i} = \frac{2b(i-1)}{b(i)} =    
  \frac{2^{i+1} - 2}{2^{i+1} - 1} <  1.
\]
We must show that all other pairs of distinct vertices have dot product at least~1.  As $b(k) \geq 1$ for all $k \geq 0$, we have $\va^{v_i} \cdot \va^{v_j} \geq 1$ and $\va^{w_i} \cdot \va^{w_j} \geq 1$ for all $1 \leq i < j \leq m$.  Finally, for $i \neq j$,   
\[
\va^{v_i} \cdot \va^{w_j} = \frac{b(j-1)}{b(i)} + \frac{b(i-1)}{b(j)} > 1,
\]
as one of the two quotients is at least 1, and the other is positive.

For the general case, choose the largest value of $m$ such that $K_{2m}-I_m$ is an induced subgraph of $G$.  Then every vertex not in this subgraph is adjacent to every vertex other than itself.  We can obtain a \ddpr{2} of~$G$ using the representation described above for the vertices of $K_{2m}-I_m$ and by letting, for every other vertex $u$, $\va^u=(1,1)$ (and by noting that every vertex has two positive coordinates one of which is at least~$1$). \qed
\end{proof}

We need a definition: for $n \geq 2$, an {\it anti-cycle} on $n$ vertices is the complement of the cycle $C_{n}$ (replace each edge by a non-edge and vice versa) and is denoted $\overline{C_{n}}$. Note that these graphs are co-bipartite if $n$ is even.  
In the remainder of the section we will give an explicit construction of a subclass of co-bipartite graphs consisting of graphs that are not 2-dot product graphs;  this subclass will consist of sufficiently large even anti-cycles.  In fact, we first  prove that any graph that contains a sufficiently large anti-cycle --- odd or even --- is not a \ddpg{2}.

\begin{lemma}\label{l-anticycle}
If $G$ contains an induced anti-cycle on at least 7 vertices, then $G$ is not a \ddpg{2}.
\end{lemma}
\begin{proof}
Suppose that a \ddpr{2} of $G$ is given.
Let $A$ be an induced anti-cycle in $G$ whose vertices we label $v_1, v_2, \ldots, v_n$, $n \geq 7$, such that $v_i$ is non-adjacent to $v_{i+1}$, $1 \leq i \leq n-1$, $v_1$ is non-adjacent to $v_n$ and all other pairs of vertices in $A$ are adjacent.
We first show that in the \ddpr{2} of $G$ all the vertices of $A$ are in a half-plane whose bounding line goes through the origin.
Notice that $v_1$ and $v_4$ are adjacent so the angle between them is less than $\pi/2$.  Every other vertex of $A$ is adjacent to either $v_1$ or $v_4$ and so lies within less than $\pi/2$ of one or the other.  Taken together, this implies that there is a quadrant that contains no vertex of $A$.  Consider the first vertex $u$ of $A$ that is anticlockwise from the empty quadrant: all but two vertices of $A$ are adjacent to $u$ and the other two are neighbours of its neighbours.  Thus the anticlockwise angle from $\va^u$ to every other vertex is less than $\pi$.

We consider the vertices of $A$ in order of the sizes of the angles moving anticlockwise from the empty half-plane. This induces a linear ordering $\prec$.   As the vertices lie in a half-plane, if $x \prec y \prec z$, then $y$ is between $x$ and $z$; that is, if we say that one vertex is between another pair in the ordering this is equivalent to the usage of between defined earlier.

We claim that the first and last vertices in the ordering, denoted $a$ and $z$, are not adjacent.
Suppose that they are adjacent: then they are not adjacent in the cycle $\overline{A}$, and
 we can choose vertices $b$ and $y$ such that $ab$ and $yz$ are vertex-disjoint non-adjacent edges of $\overline{A}$.
Then $a$, $y$, $b$ and $z$ induce a 4-cycle in $G$ and, by Lemma~\ref{lemc}, either $a$ and $b$ must be between $y$ and $z$ or vice versa.  This contradiction proves that $a$ and $z$ cannot be adjacent.

Without loss of generality, we now assume that the first and last vertices in the ordering are, respectively, $v_3$ and $v_4$.
We claim that this implies that the first three vertices in the ordering are $v_3 \prec v_5 \prec v_1$ and that the last three are $v_6 \prec v_2 \prec v_4$.   The claim proves the lemma since it implies a contradiction: $v_5$, $v_1$, $v_6$ and $v_2$ induce a 4-cycle in $G$ and, by Lemma~\ref{lemc}, either $v_5$ and $v_6$ must be between $v_1$ and $v_2$ or vice versa.  (As an aside, we observe that if $G$ contains an anti-cycle on six vertices, $\overline{C_6}$, we can follow this proof and show that the vertices have an ordering isomorphic to $v_3 \prec v_5 \prec v_1 \prec v_6 \prec v_2 \prec v_4$.  In this case, there is no contradiction with $G$ having a \ddpr{2} as the edge $v_1v_6$ does not exist in $G$ and $v_5$, $v_1$, $v_6$ and $v_2$ do not induce a 4-cycle.)

Let us prove our claim step by step.  Let the second vertex in the ordering be $v_b$.  Suppose that $b \neq 5$.  Then $v_b$ is adjacent to $v_4$ since it is neither of its two non-neighbours.  
If $b=2$, then, since $v_3 \prec v_2 \prec v_5 \prec v_4$, the edges $v_4v_2$ and $v_5v_3$ contradict Lemma~\ref{lemb2}.
So we know $b \notin \{2,3,4,5\}$.   Let $c \equiv b+1 \bmod n$. Thus $c \notin \{3,4,5,6\}$ and $v_c$ is a non-neighbour of $v_b$ and adjacent to $v_5$.  Then the edges $v_bv_4$ and $v_5v_c$ contradict
Lemma~\ref{lema}.   Thus we must have that $b=5$.

Let the penultimate vertex in the ordering be $v_y$ and suppose that $y \neq 2$. Noting that $y \notin \{2,3,4,5\}$, we find that $v_y$ is adjacent to $v_3$.  Let $x \equiv y-1 \bmod n$ so $x \notin \{1,2,3,4\}$ and $v_x$ is adjacent to $v_2$ but not to $v_y$.
Then the edges $v_3v_y$ and $v_xv_2$ contradict
Lemma~\ref{lema}.   Thus $y=2$.

Let $v_c$ be the third vertex in the ordering and suppose that $c \neq 1$.  
Then $v_c$ is adjacent to $v_2$ since it is neither of its two non-neighbours.  
If $c=6$, then, since $v_5 \prec v_6 \prec v_1 \prec v_2$, the edges $v_5v_1$ and $v_6v_2$ contradict Lemma~\ref{lemb2}.
So we know $c \notin \{1,2,3,4,5,6\}$.   Let $d \equiv c-1 \bmod n$. Thus $d \notin \{n,1,2,3,4,5\}$ and $v_d$ is a non-neighbour of $v_c$ and adjacent to $v_1$.  Then the edges $v_cv_2$ and $v_1v_d$ contradict
Lemma~\ref{lema}.   Thus we must have that $c=1$.

Let $v_x$ be the antepenultimate vertex in the ordering and suppose that $x \neq 6$.
 Noting that $x \notin \{1,2,3,4,5,6\}$, we find that $v_x$ is adjacent to $v_5$.  Let $w \equiv x+1 \bmod n$ so $w \notin \{2,3,4,5,6,7\}$ and $v_w$ is adjacent to $v_6$ but not to $v_x$.
Then the edges $v_5v_x$ and $v_wv_6$ contradict
Lemma~\ref{lema}.   Thus $x=6$.
\qed
\end{proof}

\begin{theorem}\label{t-2n}
For $n \leq 6$, $\overline{C_{n}}$ is a \ddpg{2}. For $n > 7$, $\overline{C_{n}}$ is not a \ddpg{2}.
\end{theorem}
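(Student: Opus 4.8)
The plan is to treat the two ranges separately: the small cases by an explicit construction, and $n\ge 4$ by invoking Lemma~\ref{l-nested}.

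For $n\le 3$ the only case with real content is $n=3$, since $A_2$ has no edges and $A_4=\overline{C_4}$ is a disjoint union of two edges, each of which trivially has a \ddpr{2}. The graph $A_6=\overline{C_6}$ is the triangular prism --- two disjoint triangles joined by a perfect matching --- and I would simply exhibit a \ddpr{2} for it. A good way to find one is to exploit the reflective symmetry of the prism: place all six vectors in a half-plane, symmetric about a central axis, with the two triangles interleaved in angle; by the symmetry each required adjacency or non-adjacency then reduces to an inequality between two cosines, so choosing the three angles and three magnitudes becomes a small finite search. (For example, the six vectors at angles $\pm 10^{\circ},\pm 30^{\circ},\pm 50^{\circ}$ from the axis with magnitudes $4/5$, $\sqrt{2}$, $3$ --- the smaller angle from the axis paired with the smaller magnitude --- realise exactly the edge set of $A_6$.)

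For $n\ge 4$ suppose, for a contradiction, that $A_{2n}$ has a \ddpr{2}. As noted above, co-bipartiteness gives an empty quadrant, so we may assume a linear order $\prec$ on the vertices, and by Lemma~\ref{l-nested} this order is nested; label the vertices as $v_1\prec\cdots\prec v_n\prec w_n\prec\cdots\prec w_1$, with non-edges exactly those of (N1)--(N3). Because $n\ge 4$, reading off (N1)--(N3) shows that the only non-edges at $v_2$ are $v_2w_1,v_2w_3$ and the only non-edges at $v_3$ are $v_3w_2,v_3w_4$; in particular $v_2w_2$ and $v_3w_3$ are edges while $v_2w_3$ and $v_3w_2$ are non-edges. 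But $v_2\prec v_3\prec w_3\prec w_2$ in the order, so $v_3$ and $w_3$ both lie between $v_2$ and $w_2$, and Lemma~\ref{lema}, applied with $(a,b,c,d)=(v_2,v_3,w_3,w_2)$, gives a contradiction.

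I expect the negative half to be short once Lemma~\ref{l-nested} is available: its one subtlety is checking that $v_3w_3$ is an edge, which uses $n\ge 4$ and is exactly what breaks for $n=3$ (there $v_3=v_n$, so $v_3w_3=v_nw_n$ is a non-edge by (N3)). The genuinely hands-on obstacle is therefore the $n=3$ case --- fitting all six prism vectors into one half-plane while keeping the two ``long'' matching edges (between vectors far apart in the angular order) above the threshold takes some care, and a naive monotone assignment of angles to the two triangles does not suffice.
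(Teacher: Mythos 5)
Your proposal is correct and follows essentially the same route as the paper: an explicit \ddpr{2} of the prism $A_6$ (your vectors at $\pm10^{\circ},\pm30^{\circ},\pm50^{\circ}$ with magnitudes $4/5,\sqrt{2},3$ do check out, all fifteen dot products landing on the right side of the threshold), and for $n\geq 4$ exactly the paper's argument, namely Lemma~\ref{l-nested} followed by Lemma~\ref{lema} applied to $v_2,v_3,w_3,w_2$. One cosmetic remark: your working example does not have the two triangles ``interleaved in angle'' --- they occupy the two halves of the angular range, as the nested ordering forces --- but since the explicit vectors are what carry the proof, this wording slip is harmless.
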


\begin{proof}
The cases where $n \leq 4$ are trivial. Note that $\overline{C_5} = C_5$, and thus the claim follows from Theorem~\ref{t-cycle} for $n=5$. 
We show that $\overline{C_{6}}$ is 2-dot product by displaying a representation:\\[4pt]
(5, 0), (3, 1/6),  (1/2, 1/4), (1/4, 1/2), (1/6, 3), (0, 5). \\[4pt]
(Noting the aside on the case where $G$ contains an induced $\overline{C_{6}}$ in the proof of Lemma~\ref{l-anticycle}, this is the only possible  representation up to isomorphism.)  

For $n \geq 7$, the theorem follows from Lemma~\ref{l-anticycle}.\qed
\end{proof}

\section{Unit Circular-Arc Graphs and Unit Disk Graphs}\label{s-circular}

For an integer $k\geq 2$, consider the unit sphere $S^{k}$. Then for some vector $\vc \in S^{k}$, a \emph{cap} of $S^{k}$ is the set $\{\mathbf{x} \in S^{k} \mid \vc \cdot \mathbf{x} \geq a\}$, where $a$ is a real number in $(0,1]$. We call the vector~$\vc$ the \emph{centre} of the cap, and $2\arccos a$ its \emph{angular diameter}. Observe that, given the range of $a$, the angular diameter of each cap lies in $[0,\pi)$.  
 
Fiduccia~\etal\cite{FSTZ98} considered the capture graph of caps of $S^{k}$: a graph $G$ is a capture graph if one can assign to each vertex a cap on $S^k$ so that if a pair of vertices are adjacent the centre of one cap is contained in the other and if they are not adjacent the caps are disjoint.  They showed that such a graph has dot product dimension at most~$k+1$.
This understanding of cap capture graphs was crucial in the argument that interval graphs are \ddpg{2}s (see Theorem~\ref{t-interval}) and trees are \ddpg{3}s (see Fiduccia~\etal\cite{FSTZ98}).
Kang~\etal\cite{KLMS11} studied the contact graph of caps: a graph $G$ is a contact graph if one can assign to each vertex a cap on $S^k$ so that if a pair of vertices are adjacent the caps intersect in a single point and if they are not adjacent the caps are disjoint.  They showed that such a graph has dot product dimension at most $k+2$. 
This understanding of cap contact graphs was crucial in the argument that planar graphs are \ddpg{4}s (as proved by Kang~\etal\cite{KLMS11}).

We consider \emph{unit caps}: a set of caps of $S^{k}$ is unit if all caps in the set have the same angular diameter $\theta \in [0,\pi/2)$. For unit caps we are able to prove the following general bound.

\begin{theorem} \label{thm:unitcap}
For every integer $k\geq 2$, the intersection graph of a set of unit caps of $S^{k}$ has dot product dimension at most $k+1$.
\end{theorem}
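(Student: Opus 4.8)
The plan is to show that for a set of unit caps the centres of the caps themselves, with a suitably chosen positive threshold, already form a dot product representation. Since all caps are unit with angular diameter $\theta\in[0,\pi/2)$, each is of the form $\{\mathbf{x}\in S^{k}\mid \vc^{u}\cdot\mathbf{x}\ge a\}$ for one and the same value $a=\cos(\theta/2)$, and it consists exactly of those points of $S^{k}$ at angular distance at most $r:=\arccos a=\theta/2$ from its centre $\vc^{u}\in S^{k}$. The numerical fact to record at this stage is that $\theta<\pi/2$ forces $r<\pi/4$, hence $a=\cos(\theta/2)>\cos(\pi/4)=1/\sqrt{2}$, and therefore $2a^{2}-1>0$ (the degenerate case $\theta=0$, where caps are single points and $a=1$, is included in this formula).

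Next I would turn adjacency in the intersection graph into an inequality between the centres. Two caps of angular radius $r$ with centres $\vc^{u},\vc^{v}$ share a point of $S^{k}$ if and only if the angular distance $\arccos(\vc^{u}\cdot\vc^{v})$ between the centres is at most $2r$: if it is (so in particular less than $\pi$), the midpoint of the geodesic joining the two centres lies in both caps, while conversely any common point together with the spherical triangle inequality gives $\arccos(\vc^{u}\cdot\vc^{v})\le r+r$. Since $2r<\pi/2\le\pi$ and $\cos$ is strictly decreasing on $[0,\pi]$, this condition is equivalent to $\vc^{u}\cdot\vc^{v}\ge\cos 2r=2\cos^{2}r-1=2a^{2}-1$. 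So $u$ and $v$ are adjacent in the intersection graph if and only if $\vc^{u}\cdot\vc^{v}\ge 2a^{2}-1$.

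To finish, put $t:=2a^{2}-1$, which is a positive real by the first step, and assign to each vertex $u$ the vector $\vc^{u}\in\mathbb{R}^{k+1}$ (recall $S^{k}\subset\mathbb{R}^{k+1}$). By the previous paragraph this assignment with threshold $t$ is a \ddpr{(k+1)} of the intersection graph, so the latter has dot product dimension at most $k+1$. I do not anticipate a serious obstacle; the two points needing a little care are the spherical triangle-inequality argument characterising when two unit caps meet (and verifying $2r\le\pi$, so that $\cos$ is monotone where it is used), and making the unit hypothesis $\theta<\pi/2$ do its work, namely guaranteeing that the threshold $2a^{2}-1$ is strictly positive — precisely the place where the argument would break down without the unit assumption.
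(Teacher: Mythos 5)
Your proposal is correct and takes essentially the same approach as the paper: your threshold $2a^{2}-1$ is just $\cos\theta$, and the paper's proof uses the identical representation after rescaling each centre by $1/\sqrt{\cos\theta}$ so that the threshold becomes $1$. The only difference is that you spell out, via the geodesic-midpoint and spherical triangle-inequality argument, why two unit caps intersect exactly when their centres are at angle at most $\theta$, a fact the paper states without proof.
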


\begin{proof}
Let $\mc{C} = \{C_{1},\ldots,C_{n}\}$ denote a set of unit caps of $S^{k}$, let $\vc_{i}$ denote the centre vector of $C_{i}$, and let $\theta \in [0,\pi/2)$ denote the common angular diameter of the caps. Define $\va_{i} = \frac{1}{\sqrt{\cos \theta}}\, \vc_{i}$ and let $\mc{A} = \{\va_{1},\ldots,\va_{n}\}$. Since $\theta \in [0,\pi/2)$, $\mc{A}$ is properly defined.

Now observe that if $C_{i}$ and $C_{j}$ intersect, then the angle between $\vc_{i}$ and $\vc_{j}$ is at most $\theta$.  Hence,
\[
\va_{i} \cdot \va_{j} \geq \left(\frac{1}{\sqrt{\cos \theta}}\right)^{2}\, \cos\theta = 1.
\] 
If $C_{i}$ and $C_{j}$ do not intersect, then the angle between $\vc_{i}$ and $\vc_{j}$ is larger than $\theta$. Hence,
\[ 
\va_{i} \cdot \va_{j} < \left(\frac{1}{\sqrt{\cos \theta}}\right)^{2}\, \cos\theta = 1.
\]
It follows that the intersection graph $G$ of $\mc{C}$ is isomorphic to the dot product graph of $\mc{A}$. Since the vectors in $\mc{A}$ lie in $\mathbb{R}^{k+1}$, the dot product dimension of $G$ is at most $k+1$.\qed
\end{proof}

\begin{corollary}\label{cor:unitdisk}
Unit disk graphs have dot product dimension at most~$3$.
\end{corollary}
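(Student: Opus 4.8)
The plan is to realise every unit disk graph as the intersection graph of a finite family of unit caps of the sphere $S^2 \subset \mathbb{R}^3$, and then apply Theorem~\ref{thm:unitcap} with $k = 2$.

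First I would normalise the input. Let $G$ be a unit disk graph, so there are points $p_1, \dots, p_n \in \mathbb{R}^2$ and a threshold $r > 0$ with $ij \in E(G)$ if and only if $|p_i - p_j| \le r$. Replacing $r$ by a value strictly between $\max\{\,|p_i - p_j| : ij \in E(G)\,\}$ and $\min\{\,|p_i - p_j| : ij \notin E(G)\,\}$, I may assume there is an $\varepsilon > 0$ with $|p_i - p_j| \le r - \varepsilon$ for every edge and $|p_i - p_j| \ge r + \varepsilon$ for every non-edge. This gap is what allows a single angular diameter to serve all caps simultaneously. Note also that the finitely many $p_i$ automatically lie in a bounded region.

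Next, for a small parameter $\delta > 0$ to be fixed later, I would send $p_i$ to the point
\[
c_i \;=\; \frac{1}{\sqrt{\delta^2 |p_i|^2 + 1}}\,(\delta p_i,\, 1) \;\in\; S^2,
\]
a scaled inverse stereographic projection. A direct computation, writing $s_i := \sqrt{\delta^2|p_i|^2+1}$, yields
\[
c_i \cdot c_j \;=\; \frac{s_i^2 + s_j^2 - \delta^2|p_i-p_j|^2}{2\,s_i s_j} \;=\; 1 - \frac{\delta^2|p_i-p_j|^2 - (s_i-s_j)^2}{2\,s_i s_j}.
\]
Since the $p_i$ are bounded we have $s_i = 1 + O(\delta^2)$ uniformly in $i$, hence $c_i \cdot c_j = 1 - \frac{\delta^2}{2}|p_i - p_j|^2 + O(\delta^4)$ with the implied constant uniform over all pairs $i,j$. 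Now choose $\theta$ with $\cos\theta = 1 - \frac{\delta^2 r^2}{2}$; for $\delta$ small enough $\theta \in (0,\pi/2)$, so the caps of $S^2$ of angular diameter $\theta$ centred at the $c_i$ are unit caps. Two such caps meet precisely when the angular distance between their centres is at most $\theta$, i.e.\ when $c_i \cdot c_j \ge \cos\theta$; and by the estimate above together with the gap $\varepsilon$, for all sufficiently small $\delta$ this happens precisely when $|p_i - p_j| \le r$, that is, precisely when $ij \in E(G)$. So $G$ is the intersection graph of these $n$ unit caps of $S^2$, and Theorem~\ref{thm:unitcap} with $k = 2$ gives that $G$ has dot product dimension at most $3$.

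I expect the one delicate point to be the estimate in the previous paragraph: a single common angular diameter must reproduce every adjacency and non-adjacency, yet the stereographic map distorts distances in a way that depends on where the points lie. Both devices used above are there precisely to handle this — shrinking the configuration (letting $\delta \to 0$) makes the distortion of second order, and the gap $\varepsilon$ absorbs the residual error. Since no distortion-free map embeds the unbounded plane into a bounded sphere, the finiteness of the representation is used in an essential way.
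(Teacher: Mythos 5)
Your argument is correct and is essentially the paper's own proof: the paper simply says that by scaling and a projection onto $S^2$ every unit disk graph becomes the intersection graph of unit caps of $S^2$, and then invokes Theorem~\ref{thm:unitcap} with $k=2$; you have filled in the details (the $\varepsilon$-gap from finiteness, the explicit map, and the uniform estimate $c_i \cdot c_j = 1 - \tfrac{\delta^2}{2}|p_i-p_j|^2 + O(\delta^4)$), all of which check out. The only cosmetic difference is that your map is a central (gnomonic) projection of the scaled points rather than a stereographic one, which makes no difference to the argument.
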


\begin{proof}
The intuition of our argument is that we map (through an inverse stereographic projection) the unit disks in the plane to (almost) unit caps in a region of a very large sphere. The sphere is so large that this region looks almost like a plane, but for a tiny distortion. We argue that this distortion can be made insignificantly small and thus the mapping determines a set of unit caps with the same intersecting pairs as the unit disks.

More formally, consider a set $D$ of unit disks in the plane that represent the unit disk graph. By scaling (scaling up the radius of all disks slightly, and then scaling down the entire plane), we may assume that there is an $\epsilon > 0$ such that the distance between any two disk centers is at least $1+\epsilon$ when the disks do not intersect and at most $1-\epsilon$ when the disks do intersect. Then we say that the disks have \emph{separation} $\epsilon$. By translating, we may assume that all unit disks are contained in a box of $2n \times 2n$ with its bottom-left corner at the origin. By abuse of notation, we denote this set of unit disks by $D$ as well.

We now apply the inverse of a stereographic projection. Recall that a stereographic projection considers a sphere of radius~$r$, centered at the origin, and maps a point $p \not= (0,r,0)$ on the unit sphere to the point $p'$ that is the intersection of the plane $y=-r$ with the straight line through $p$ and the north pole $(0,r,0)$ of the sphere. This mapping is bijective, so the inverse is well defined. Moreover, the mapping preserves object intersections. Note though that a stereographic projection is not isometric. However, by setting $r$ to be a number depending on $n$ and a number $\delta < \epsilon$, we can assume that the part of the sphere that projects to the $2n \times 2n$ box with its bottom-left corner at $(0,-r,0)$ is almost a plane, distorting distances between any two points in this region by at most (an additive factor) $\delta$.
Now place $D$ in the plane $y=-r$ and perform the inverse mapping on the boundaries of the unit disks in $D$.

The distortion of the mapping, unfortunately, does not map unit disks to unit caps, but to `oval caps'. Now place a unit cap of maximum possible size inside each `oval cap' and let $C$ be the resulting set of unit caps. By choosing $\delta$ significantly less than $\epsilon$, the distortion is slight and the `oval caps' closely resemble the unit caps in $C$. Moreover, this choice of $\delta$ ensures that the separation property of $D$ transfer the intersections and non-intersections to $C$. Hence, $C$ indeed represents the same graph as $D$. The result then follows from Theorem~\ref{thm:unitcap}.
\qed\end{proof}

It would seem that Theorem~\ref{thm:unitcap} also implies that all unit circular-arc graphs have dot product dimension at most~$2$. However, due to the limited angular diameter allowed in our definition of unit caps, this implication only holds if the graph has a unit circular-arc representation using unit caps of $S^{1}$. This is the case, for example, when the graph has no maximal independent set of size less than~4. 

\begin{theorem}\label{t-independentset}
If $G$ is a unit circular-arc graph with no maximal independent set of size less than~$4$, then $G$ is a \ddpg{2}.
\end{theorem}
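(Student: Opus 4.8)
The plan is to reduce the statement to Theorem~\ref{thm:unitcap} in the case $k=1$: if $G$ can be realised as the intersection graph of a set of unit caps of $S^{1}$, then that theorem immediately gives that $G$ has dot product dimension at most $2$, i.e.\ is a \ddpg{2}. A unit circular-arc representation already presents $G$ as the intersection graph of equal-length arcs of a circle, and every arc of angular length less than $\pi$ on the circle is a cap (take its centre to be the midpoint of the arc). Hence the only thing standing in the way is the restriction $\theta \in [0,\pi/2)$ in the definition of a \emph{unit} cap, and the whole proof boils down to showing that in a unit circular-arc representation of $G$ the common arc length is strictly less than a quarter of the circle.

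First I would normalise the host circle to be $S^{1}$ itself, so that the $n$ arcs $A_{1},\dots,A_{n}$ all have angular length $\ell$. Since every maximal independent set of $G$ has size at least $4$, the graph $G$ has an independent set of size $4$ (in fact only this weaker consequence is used), so four of the arcs, say $A_{i_{1}},\dots,A_{i_{4}}$, are pairwise disjoint. I would then show $4\ell<2\pi$. Being pairwise disjoint, their union has measure $4\ell$, so certainly $4\ell\le 2\pi$; and if $4\ell=2\pi$ the four closed arcs would cover $S^{1}$ (their union being closed with complement of measure zero, hence empty), which is impossible since ordering the arcs cyclically leaves a gap of positive length between each consecutive pair. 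Therefore $\ell<\pi/2$.

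It then follows at once that each $A_{j}$ is a cap of $S^{1}$, with centre the midpoint of $A_{j}$ and parameter $a=\cos(\ell/2)\in(0,1]$, and that all these caps have the same angular diameter $\ell\in[0,\pi/2)$; so they form a set of unit caps, whose intersection graph is exactly $G$. Applying Theorem~\ref{thm:unitcap} with $k=1$ yields that $G$ has dot product dimension at most $2$, that is, $G$ is a \ddpg{2}.

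The one delicate point I expect is the strict inequality $4\ell<2\pi$: the bound $4\ell\le 2\pi$ is a one-line measure count, but ruling out equality uses the (standard) convention that the arcs of a circular-arc representation are closed. Under the half-open convention one would dispose of the case $4\ell=2\pi$ separately --- the full hypothesis then forces $G$ to consist of exactly the four tiling arcs, so $G\cong\overline{K_{4}}$, which is trivially a \ddpg{2}; alternatively one simply shrinks all arcs uniformly by a small amount, which in that case deletes no edge. Everything else is routine bookkeeping about the correspondence between short arcs of $S^{1}$ and unit caps.
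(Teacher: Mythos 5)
Your proposal is correct and follows essentially the same route as the paper: view the unit circular-arc representation as a set of unit caps of $S^{1}$, use the hypothesis (via an independent set of size $4$, hence four pairwise disjoint equal-length arcs) to conclude the common arc length is less than a quarter of the circle, and then invoke Theorem~\ref{thm:unitcap} with $k=1$. In fact you spell out the quantitative step (the strict bound $4\ell<2\pi$ and the endpoint convention) more carefully than the paper's rather terse proof does.
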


\begin{proof}
A cap of $S^1$ is essentially equal to an arc of the circle. Our definition of unit caps limits the angular diameter of unit caps to at most $\frac{1}{2}\pi$, that is, unit arcs that each cover at most $\frac{1}{4}$ of the circle. So in order for the proof to work, we need to ensure that the unit arcs in a representation of the unit circular-arc graph each cover at most $\frac{1}{4}$ of the circle. This is guaranteed to be the case, for example, when the graph has no maximal independent set of size less than~4 (if the graph has a maximal independent set of size~4 or more, then necessarily each arc covers less than $\frac{1}{4}$ of the circle). \qed
\end{proof}

Surprisingly, the restriction on the size of a maximal independent set in Theorem~\ref{t-independentset} is not an artifact of our proof technique, but is actually needed: in Figure~\ref{fig:circ} is an example of a graph $J$ that is a unit circular-arc graph and that has dot product dimension larger than~2 (this will be shown in the proof of Theorem~\ref{l-main2dot}). Note that such an example must have triangles, due to the following proposition.

\begin{proposition}
Any triangle-free unit circular-arc graph is isomorphic to a path or a cycle. Therefore it has dot product dimension $2$.
\end{proposition}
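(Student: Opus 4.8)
The plan is to show that a triangle-free unit circular-arc graph has maximum degree at most~$2$, from which it is immediate that each connected component is a path or a cycle. First I would fix a unit circular-arc representation: assign to each vertex $v$ an arc $I_v$ of a fixed circle, all arcs having the same length $\ell$, with $uv$ an edge if and only if $I_u \cap I_v \neq \emptyset$. The key observation is that if three arcs of the same length pairwise intersect but have empty common intersection, then by the Helly property of arcs one of the three must \emph{contain} one of the endpoints of... actually more carefully: for arcs on a circle, if three of them pairwise intersect, then either they have a common point, or together they cover the whole circle. If they have a common point, the three corresponding vertices form a triangle; so in a triangle-free graph this cannot happen for any three pairwise-adjacent arcs. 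Hence three arcs covering the whole circle would also be forbidden only if they are pairwise intersecting — but since the graph is triangle-free, no three arcs are pairwise intersecting at all, so I just need the direct degree bound.

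So the core step is: suppose a vertex $v$ has three neighbours $x$, $y$, $z$. Then $I_v$ meets each of $I_x, I_y, I_z$. Since the graph is triangle-free, $x,y,z$ are pairwise non-adjacent, so $I_x, I_y, I_z$ are pairwise disjoint. Now $I_v$ is an arc of length $\ell$ meeting three pairwise-disjoint arcs each of length $\ell$; I would argue that this forces $\ell$ to be more than a third of the circumference, and in fact close to the full circle. Concretely, order the three disjoint arcs cyclically around the circle; $I_v$ must reach a point of each, so $I_v$ contains points in all three "sectors", which already forces $|I_v|$ to exceed the total length of the two "gaps" it must bridge. But then $I_v$ is so long that at least two of $I_x, I_y, I_z$ — being the same length $\ell$ — would themselves overlap, contradicting that $x,y,z$ are independent. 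Making this quantitative: if $I_v$ has length $\ell$ and meets three pairwise-disjoint length-$\ell$ arcs, the circumference is at most (roughly) $\ell + $ (the portions of the three arcs not covered by $I_v$) which is bounded in terms of $\ell$, and one derives that two of the three arcs are within distance $<\ell$ of each other along the part of the circle not covered by $I_v$, hence intersect.

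After establishing maximum degree~$2$, a connected triangle-free unit circular-arc graph is a path or a cycle, and by definition these graphs are connected (or I state the proposition for connected graphs, or allow disjoint unions of paths and cycles — I would phrase it as "each connected component is isomorphic to a path or a cycle"). Finally, Fiduccia~\etal's results cited earlier give that paths (being caterpillars, or interval graphs) are \ddpg{2}s, and cycles are \ddpg{2}s (for $C_4$ and cycles of length at least~$6$ by~\cite{FSTZ98}, for $C_5$ by~\cite{LC13}, and $C_3 = K_3$ trivially); this yields dot product dimension at most~$2$, and since a path or cycle on at least two vertices is not a disjoint union of at most two threshold graphs in general (it can be, but e.g.\ $P_4$ is a threshold graph's... ) — to be safe I would simply conclude "dot product dimension~$2$" as stated, noting it is not~$1$ for, say, $C_5$ and cycles, but this subtlety is not needed if we only claim "at most~$2$"; the cited statement says "dimension~$2$", which holds since e.g.\ $C_4, C_5$ already force it and every path/cycle is covered.

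The main obstacle I expect is the quantitative geometric argument that a single unit arc cannot meet three pairwise-disjoint unit arcs: one has to be careful about the cyclic ordering and about whether the three disjoint arcs could be arranged with small total length leaving $I_v$ room to touch each without being too long. The clean way is to note that the three disjoint arcs, read cyclically as $A, B, C$, leave three gaps $g_1, g_2, g_3$ between them; $I_v$ must contain an endpoint-region from each of $A, B, C$, so it spans at least two full gaps plus parts of the three arcs, whence $\ell = |I_v| \geq g_1 + g_2 + \varepsilon$ (for some gap configuration), and simultaneously the circumference is $3\ell + g_1 + g_2 + g_3$; combining, one shows $|A| + |B| > $ (the arc-distance between $A$ and $B$ going the "short way" around outside $I_v$), forcing $A \cap B \neq \emptyset$ — a contradiction. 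I would present this as a short lemma and keep the circle-arithmetic to a minimum.
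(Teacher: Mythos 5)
Your argument is correct but takes a genuinely different route from the paper's. The paper quotes the known fact that unit circular-arc graphs are claw-free, takes a shortest induced cycle $C$ (of length at least $4$ by triangle-freeness), observes that the arcs of $C$ must cover the circle, and then uses claw-freeness to argue that any further arc would meet two consecutive arcs of $C$ and create a triangle; this simultaneously rules out extra vertices and gives connectivity in the cycle case. You instead prove a maximum-degree bound directly: triangle-freeness makes the arcs of three neighbours of a vertex pairwise disjoint, and a unit arc cannot meet three pairwise disjoint unit arcs --- in effect you re-derive the relevant instance of claw-freeness from scratch. This makes your proof self-contained (no appeal to claw-freeness of unit circular-arc graphs or to the covering property of induced cycles of length at least~$4$), at the cost of some circle arithmetic that the paper avoids. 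Two small points. First, in your key lemma the clean contradiction is that $I_v$ must contain the \emph{entire} middle one of the three disjoint arcs together with the two gaps flanking it, so $|I_v| \geq \ell + g_i + g_j > \ell$; your sketch only claims ``two full gaps plus parts of the three arcs'', which is not yet a contradiction, and the subsequent manipulation with $|A|+|B|$ is not actually derived --- the fix is immediate, but phrase it as containment of the middle arc. Second, your degree bound only yields that every component is a path or a cycle; since \ddpg{2}s are not obviously closed under disjoint union, you should either note that a disjoint union of paths is an interval graph (hence a \ddpg{2} by Fiduccia~\etal\cite{FSTZ98}) and that a cycle component of length at least~$4$ forces the whole graph to be that cycle (its arcs cover the circle, as in the paper's argument), or restrict attention to connected graphs, as the paper implicitly does.
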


\begin{proof}
Suppose that $G$ is a triangle-free unit circular-arc graph and is not isomorphic to a path. Since unit circular-arc graphs are claw-free, $G$ is not a tree and so contains a cycle. Let $C$ be a shortest induced cycle of $G$. Since $G$ is triangle-free, $|C| \geq 4$. Then the arcs of $C$ must cover the circle. Since $G$ is claw-free, any arc not on the cycle must intersect at least two arcs of the cycle, creating a triangle. Hence, $G$ is isomorphic to~$C$.
\qed\end{proof}

\tikzstyle{vertex}=[circle,draw=black,  minimum size=5pt, inner sep=1pt]
\tikzstyle{edge} =[draw,-,black]

\begin{figure}[tb]
\begin{center}
\begin{tikzpicture}[scale=0.9]

 \begin{scope}[xshift=5cm, scale=0.42]


  \coordinate (c2) at (0,0);

  \draw[fill=blue, red]
  ($(c2) + (0:30mm)$) arc (0:105:30mm)
  --
  ($(c2) + (105:31mm)$) arc (105:0:31mm)
  -- cycle;

  \draw[fill=blue, red]
  ($(c2) + (90:32mm)$) arc (90:195:32mm)
  --
  ($(c2) + (195:33mm)$) arc (195:90:33mm)
  -- cycle;

  \draw[fill=blue, red]
  ($(c2) + (180:30mm)$) arc (180:285:30mm)
  --
  ($(c2) + (285:31mm)$) arc (285:180:31mm)
  -- cycle;

  \draw[fill=blue, red]
  ($(c2) + (270:32mm)$) arc (270:375:32mm)
  --
  ($(c2) + (375:33mm)$) arc (375:270:33mm)
  -- cycle;

  \draw[fill=red, blue]
  ($(c2) + (127.5:34mm)$) arc (127.5:232.5:34mm)
  --
  ($(c2) + (232.5:35mm)$) arc (232.5:127.5:35mm)
  -- cycle;

  \draw[fill=red, blue]
  ($(c2) + (-52.5:34mm)$) arc (-52.5:52.5:34mm)
  --
  ($(c2) + (52.5:35mm)$) arc (52.5:-52.5:35mm)
  -- cycle;

  \draw[fill=red, blue]
  ($(c2) + (37.5:36mm)$) arc (37.5:142.5:36mm)
  --
  ($(c2) + (142.5:37mm)$) arc (142.5:37.5:37mm)
  -- cycle;

  \draw[fill=red, blue]
  ($(c2) + (217.5:36mm)$) arc (217.5:322.5:36mm)
  --
  ($(c2) + (322.5:37mm)$) arc (322.5:217.5:37mm)
  -- cycle;
\end{scope}

 \begin{scope}[xshift=0cm, scale=1]
   \foreach \pos/\name / \posn / \dist / \colour in {{(-1,0)/s/{left}/1/red}, {(0,1)/t/{above}/1/red}, {(1,0)/u/{right}/1/red}, {(0,-1)/v/{below}/1/red}, {(-1.45,1.45)/w/{left}/1/blue}, {(1.45,1.45)/x/{right}/1/blue}, {(1.45,-1.45)/y/{right}/1/blue}, {(-1.45,-1.45)/z/{left}/1/blue}}
       { \node[vertex, fill=\colour] (\name) at \pos {};
       \node [\posn=\dist] at (\name) {$\name$};
            }
             
\foreach \source/ \dest  in {s/t, t/u, u/v, v/s, w/x, x/y, y/z, z/w, s/w, s/z, t/w, t/x, u/x, u/y, v/y, v/z}
       \path[edge, black!50!white,  thick] (\source) --  (\dest);
       
\end{scope}
\end{tikzpicture}
\end{center}
\vspace{-0.5cm}
\caption{The graph $J$ and its representation as a unit circular-arc graph.}\label{fig:circ}
\end{figure}

\begin{theorem}\label{l-main2dot}
There exist unit circular-arc graphs that do not have a \ddpr{2}.
\end{theorem}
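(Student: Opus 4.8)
The plan is to take $J$ to be the graph of Figure~\ref{fig:circ}. It is convenient to note first that $J$ is isomorphic to $C_8^2$, the square of the $8$-cycle: its vertices can be labelled $0,1,\dots,7$ so that $i$ and $j$ are adjacent exactly when $i-j\equiv\pm1$ or $\pm2\pmod 8$, and under such a labelling the inner square $\{s,t,u,v\}$ of the figure becomes $\{0,2,4,6\}$ and the outer square $\{w,x,y,z\}$ becomes $\{1,3,5,7\}$. In this labelling the edges of $J$ fall into the $8$-cycle $0\!-\!1\!-\!\cdots\!-\!7\!-\!0$ of distance-$1$ edges (which are exactly the edges running between the two squares) together with the two induced $4$-cycles $C=(0,2,4,6)$ and $C'=(1,3,5,7)$ of distance-$2$ edges. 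The right-hand picture of Figure~\ref{fig:circ} witnesses that $J$ is a unit circular-arc graph: one places eight arcs of equal length, each covering a little more than a quarter of the circle, around the circle so that two of them meet precisely when they are at most two positions apart in cyclic order, and the intersection graph obtained is $C_8^2$. So the content of the theorem is that $J$ has no \ddpr{2}.

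Suppose for contradiction that $J$ has a \ddpr{2}, with threshold $1$. Applying Lemma~\ref{lemc} to the $4$-cycle $C$, the vectors $\va^0,\va^2,\va^4,\va^6$ lie in a closed half-plane, and in the resulting angular order one of the non-adjacent pairs $\{0,4\}$, $\{2,6\}$ occupies the two extreme positions and the other occupies the two middle positions. The automorphisms of $J$ are exactly the rotations and reflections of the cyclic labelling, and these let me normalise: up to symmetry the angular order of $C$ may be taken to be $\va^0,\va^2,\va^6,\va^4$. Applying Lemma~\ref{lemc} to $C'$ as well, and combining it with the constraints coming from the edges between $C$ and $C'$ (which force certain pairs of vectors to subtend a small angle), leaves only a handful of possibilities for how $\va^1,\va^3,\va^5,\va^7$ are arranged relative to $\va^0,\va^2,\va^6,\va^4$.

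The core of the proof is to eliminate each of these possibilities. From the angular order of $C$ and Lemma~\ref{lemb} one reads off magnitude inequalities: $2$ lies between $0$ and $6$, the pair $\{0,6\}$ is an edge while $\{2,6\}$ is a non-edge, so $|\va^0|>|\va^2|$; similarly $|\va^4|>|\va^6|$, and the analogous inequalities hold inside $C'$. I would then bring in the eight distance-$1$ edges $i\!-\!(i+1)$ --- precisely those joining $C$ to $C'$ --- together with the relevant non-edges. Each vertex of $C'$ is adjacent to two prescribed vertices of $C$ (and conversely), hence sits in a narrow angular window around them; its position relative to those two vertices splits into a few sub-cases, and in each one Lemma~\ref{lemb} compares its magnitude with one of the two, while Lemmas~\ref{lema} and~\ref{lemb2} rule out the betweenness patterns that would otherwise survive. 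Propagating these comparisons around the graph forces a cyclic chain of strict inequalities among $|\va^0|,\dots,|\va^7|$ --- or, in some cases, a four-vertex configuration contradicting Lemma~\ref{lema} --- which is the desired contradiction.

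The step I expect to be the main obstacle is precisely this last case analysis: for each of the four ``cross'' vertices there are several possible angular positions, and one must track the betweenness relations and magnitude inequalities they generate simultaneously and show that they are jointly inconsistent. Using the automorphism group of $J$ to collapse symmetric configurations is what keeps the number of cases under control.
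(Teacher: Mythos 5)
Your setup is sound and matches the paper's: the graph $J$ of Figure~\ref{fig:circ} is indeed $C_8^2$, the figure does witness that it is a unit circular-arc graph, and the paper likewise starts by applying Lemma~\ref{lemc} to one of the induced $4$-cycles to fix an angular order of those four vectors (the paper takes $s \prec t \prec v \prec u$, which is your normalisation $\va^0,\va^2,\va^6,\va^4$). But from that point on your proposal is a plan rather than a proof: the entire content of the theorem is the elimination of the possible placements of the four remaining vertices, and you do not carry it out. You assert that combining Lemma~\ref{lemc} on $C'$ with the cross edges ``leaves only a handful of possibilities'' and that propagating magnitude comparisons from Lemma~\ref{lemb} ``forces a cyclic chain of strict inequalities'' or a configuration contradicting Lemma~\ref{lema}, but no such chain is exhibited, the list of cases is never written down, and you yourself flag this case analysis as the main obstacle. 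Without it nothing has been shown about $J$ failing to have a \ddpr{2}.

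Two further points suggest the gap is not merely clerical. First, the claim that a vertex of $C'$ adjacent to two prescribed vertices of $C$ ``sits in a narrow angular window around them'' does not follow from the lemmas: a dot product of at least $1$ constrains the angle only to be less than $\pi/2$, and the available tools (Lemmas~\ref{lema}, \ref{lemb}, \ref{lemb2}, \ref{lemc}) speak about betweenness and relative magnitudes, not about small angles. Second, the paper's own case analysis is not a routine magnitude-propagation: it orders the specific quadruple $t,v,x,z$ (two middle vertices of the inner cycle and the two outer vertices forming the crossing edge/non-edge pattern with them), and in the hardest case ($x \prec t \prec v \prec z$) the contradiction requires bringing in the extreme vertices $s$ or $u$ and applying Lemma~\ref{lema} to $s,x,t,z$ or $x,s,v,u$ --- a step your sketched ``cyclic chain of inequalities'' scheme gives no indication of handling. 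So the approach is the right one, but the core argument still has to be done.
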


\begin{proof}
It is sufficient to show that the graph $J$ of Figure~\ref{fig:circ} does not have a \ddpr{2}. Suppose a representation exists.  By Lemma~\ref{lemc}, we can assume that $\va^s$, $\va^t$, $\va^u$ and $\va^v$ are in a half-plane and that if $\prec$ is the linear ordering given by the size of the angles from one of the half-lines bounding the half-plane, then $s \prec t \prec v \prec u$. We can include $w$, $x$, $y$ and $z$ in this ordering: if a vertex $a$ is not in the half-plane, then we say it precedes $s$ in the ordering only if $\va^a$ is in the quadrant that precedes $s$ (if, without loss of generality, $\va^s$ lies on the positive horizontal axis and $\prec$ is a clockwise ordering, then this is the top-left quadrant).

We consider the ordering of $t$, $v$, $x$ and $z$ and prove the theorem by showing that all possible orderings give a contradiction. By Lemma~\ref{lema}, $t \prec v \prec z \prec x$, is not possible.   By Lemma~\ref{lemb2}, $t \prec v \prec x \prec z$, is not possible. Thus $v$ cannot precede both $x$ and $z$, and, by the symmetry of $J$,  $x$ and $z$ cannot both precede $t$.  

Suppose that neither of $x$ and $z$ is between $t$ and $v$.  As, by  Lemma~\ref{lemb2}, $z \prec t \prec v \prec x$ is impossible,   we must have $x \prec t \prec v \prec z$.   But if $s \prec x$, then $s, x, t$ and $z$ contradict Lemma~\ref{lema}; if $x \prec s$, then $x, s, v, u$ provide the contradiction.

So one of $x$ and $z$ must be between $t$ and $v$.  Without loss of generality we can assume $t \prec x \prec v$.   If $x \prec z$, then $s, t, x$ and $z$ contradict Lemma~\ref{lema}.    If $t \prec z \prec x \prec v$, then Lemma~\ref{lemb2} is contradicted.  Finally, if $z \prec t$, then $z, t, x$ and~$v$ provide the contradiction. \qed
\end{proof}

In line with Corollary~\ref{cor:unitdisk}, we note that $J$ does indeed have a 3-dot product representation: $(2,0,1)$, $(0,2,1)$, $(-2,0,1)$, $(0,-2,1)$, $(1,1,-1)$, $(1,-1,-1)$, $(-1,-1,-1)$, $(-1,1,-1)$.

\section{Split Graphs}\label{s-split}

We prove the following result.

\begin{observation} \label{o-split}
For any constant $d \geq 1$, there exist split graphs that do not have a \ddpr{d}.
\end{observation}
\begin{proof}
By a result of Alekseev~\cite{Alekseev1993} (see also~\cite[Theorem~8.3]{Spinrad2003}), the speed of split graphs is $2^{\Theta(n^2)}$. Recalling Theorem~\ref{thm:2dpg:count}, however, the speed of \ddpg{d}s is only $2^{O(n \log n)}$.
\qed\end{proof}

\begin{figure}
\begin{center}

	\begin{tikzpicture}[scale=0.9]
\foreach \pos/ \name / \posn / \dist / \colour  in {{(0:0)/a/below/1/red}, {(90:2)/b/{above}/1/red}, {(210:2)/c/{below}/1/red}, {(330:2)/d/{below}/1/red}, {(150:2)/y/{left}/1/blue},{(320:1)/x/{left}/1/blue}}
   {     \node[vertex, fill=\colour]  (\name) at \pos {};
    	\node [\posn=\dist] at (\name) {$\name$};	}

\foreach \pos/ \name / \posn / \dist / \colour  in {{(30:2)/v/{right}/1/blue}, {(270:2)/w/{below}/1/blue},  {(200:1)/u/{left}/1/blue}, {(80:1)/z/{right}/1/blue}}
        \node[vertex, fill=\colour]  (\name) at \pos {};

\foreach \source/ \dest  in {y/b, y/c, v/b, v/d, w/c, w/d, u/a, u/c, x/a, x/d, z/a, z/b}
       \path[edge, black!50!white,  thick] (\source) --  (\dest);

\draw[edge, black!50!white, thick] (d) .. controls (270:1.4)   ..  (c);       
\draw[edge, black!50!white, thick] (b) .. controls (150:1.4)   ..  (c);
\draw[edge, black!50!white, thick] (b) .. controls (30:1.4)   ..  (d);
\draw[edge, black!50!white, thick] (a) .. controls (100:1)   ..  (b);
\draw[edge, black!50!white, thick] (a) .. controls (220:1)   ..  (c);
\draw[edge, black!50!white, thick] (a) .. controls (340:1)   ..  (d);
       
\end{tikzpicture}
\end{center}
\caption{The graph $K$.  As the vertices can be partitioned into a clique (red vertices) and an independent set (blue vertices), $K$ is a split graph.}\label{fig:split}
\end{figure} 

As the above result is only existential, we aim for an explicit construction. Let $K$ be the split graph obtained by combining a clique on four vertices $\{a,b,c,d\}$ with an independent set on six vertices; for each pair of vertices in the clique, there is a unique vertex in the independent set that is adjacent exactly to both vertices of the pair (see Figure~\ref{fig:split}).

\begin{theorem}\label{t-split2dot}
The split graph $K$ does not have a \ddpr{2}.
\end{theorem}
\begin{proof}
The vector representations  of the vertices $a$, $b$, $c$, and $d$ that form a clique must be contained in a single quadrant. As every other vertex is adjacent to the clique, the vector representations are contained within three quadrants. So there is some vertex such that there is no other vector within $\pi/2$ moving anticlockwise. Let $\prec$ be a linear ordering of the vertices according to their clockwise angle from this ``first`` vertex.

Assume without loss of generality that $a \prec b \prec c \prec d$. Let $x$ be the unique vertex of the independent set of $K$ adjacent to $a$ and $d$. By the symmetry of the graph, we can assume that $b \prec x$. (The cases $x \prec a$ and $a \prec x \prec b$ mirror the cases $d \prec x$ and $c \prec x \prec d$.) We now argue that a contradiction arises.

Let $y$ be the unique vertex of the independent set of $K$ adjacent to $b$ and $c$. We first show that $x \prec y$. Suppose instead that $y \prec x$. If $y \prec a$, then $y \prec a \prec b \prec x$ and Lemma~\ref{lemb2} is contradicted by $yb$ and $ax$. Now note that applying Lemma~\ref{lema} to $ax$ and $by$ implies that $b$ and $y$ cannot both be between $a$ and $x$. 
Thus, as we have, by assumption, that $b$ is between $a$ and $x$, we know that $y$ cannot be and so, as $a \prec y$, we must also have that $x \prec y$ as claimed.
This implies that $b \prec x \prec y$.

Now observe that Lemma~\ref{lema} applied to $by$ and $dx$ implies that $d$ and $x$ cannot both between $b$ and $y$. Then, as $b \prec x \prec y$ and $b \prec d$, it follows that $b \prec x \prec y \prec d$. But then $by$ and $xd$ contradict Lemma~\ref{lemb2}.
\qed\end{proof}

\section{Conclusions}
In this paper, we have related \ddpg{2}s to several known graph classes. 
In results of note, we shed light on the relationships between \ddpg{2}s and co-bipartite graphs, unit circular-arc graphs, and split graphs.

We are left with a number of interesting open problems.
Although we proved that unit disk graphs have {\dpd} at most~3, we do not know of an example that achieves this bound. Moreover, to prove the current bound, we considered intersection graphs of unit caps. An intriguing question is whether they are different from the capture graphs of caps studied by Fiduccia~\etal\cite{FSTZ98}. 
We 
believe, but 
have no proof,
that this is the case. Thinking more generally, bounding the {\dpd} of disk graphs, where the disks may have arbitrary sizes, is another intriguing question.

Finally, a more detailed analysis of further graph classes (such as subclasses of split graphs or unit circular-arc graphs) might increase our understanding of \ddpg{2}s. We have already shown in this work that an unbounded subclass of co-bipartite graphs has  \dpd~2. It would be interesting to further investigate such phenomena.

\subsubsection*{Acknowledgments}
We thank Till Miltzow for suggesting the easier argument for Corollary~\ref{cor:unitdisk} and an anonymous reviewer for several helpful suggestions.

\end{document}